\documentclass[12pt,twoside]{amsart}
\usepackage{amsmath}
\usepackage{amssymb}
\usepackage{amscd}
\usepackage{xypic}
\xyoption{all}
\setlength{\textwidth}{15.1cm}
\setlength{\evensidemargin}{0mm} \setlength{\oddsidemargin}{0mm}

%%%%%%%%%%%%%%%%%%%%%%%%%%%%%%%%%%%%%%%%%%%%%%%%%%%%%%%%%%%%%%%%%%%%%%%%
%%%%%%%%%%%%%%%%%%%%%%%%%%%%%%%%%%%%%%%%%%%%%%%%%%%%%%%%%%%%%%%%%%%%%%%%
\title[A note on period domains of Hodge structures]{A note on period domains of Hodge structures}
%%%%%%%%%%%%%%%%%%%%%%%%%%%%%%%%%%%%%%%%%%%%%%%%%%%%%%%%%%%%%%%%%%%%%%%%
%%%%%%%%%%%%%%%%%%%%%%%%%%%%%%%%%%%%%%%%%%%%%%%%%%%%%%%%%%%%%%%%%%%%%%%%

\author{Mohammad Reza Rahmati}
\address{Abdus Salam School of Mathematical Sciences , GCU, Lahore, PAKISTAN}
\email{mrahmati@cimat.mx}
%\thanks{...}
%\author{Rewayat Khan}
%\address{Abdus Salam School of Mathematical Sciences , GCU, Lahore, %PAKISTAN}
%\email{rewayat.khan@gmail.com}
%\thanks{ ... }

\date{07, 04, 2019}

%\author{Mohammad Reza Rahmati, Rewayat Khan}
%\thanks{}
%\address{Abdus Salam School of Mathematical Sciences , GCU, Lahore, %PAKISTAN  
%\hfill\break 
%\hfill\break \\
%\hfill\break }
%\email{mrahmati@cimat.mx, rewayat.khan@gmail.com}

%%%%%%%%%%%%%%%%%%%%%%%%%%%%%%%%%%%%
\newcommand{\comments}[1]{}

% p-lfgs declarations

%Declarations
\newtheorem{theorem}{Theorem}[section]
\newtheorem{proposition}[theorem]{Proposition}
\newtheorem{corollary}[theorem]{Corollary}

\newtheorem{example}[theorem]{Example}

\keywords{Automorphic cohomology, Period domains, Variation of Hodge structure, Almost complex structure, Peterson inner product}

\subjclass{14G35, 14D07, 14M17, 17B45, 20G99, 32M10,
32G20}

\begin{document}

\begin{abstract}
We present period domains of complex Hodge structures and some of their basic properties from representation theory point of view. Specifically we introduce the automorphic cohomology of period domains and the complex structure on symmetric spaces.
\end{abstract}

\maketitle

%\noindent Version: 

%%%%%%%%%%%%%%%%%%%%%%%%%%%%%%%%%%%%%%%%%%%%%%%%%%%%%%%%%%%%%%%%%%%%
\section*{Introduction and Preliminaries}
%%%%%%%%%%%%%%%%%%%%%%%%%%%%%%%%%%%%%%%%%%%%%%%%%%%%%%%%%%%%%%%%%%%%

\vspace{0.2cm}
  
A polarized Hodge structure of weight $n$ on a $\mathbb{Q}$ vector space $V$ together with a $(-1)^n$-symmetric non-degenerate $\mathbb{Q}$-bilinear form $Q : V \times V \to \mathbb{Q}$, is given by a representation  

\begin{equation}
\phi_0: \mathbb{U}(\mathbb{R}) \to Aut(V_{\mathbb{R}},Q) 
\end{equation}

\noindent
defined over $\mathbb{R}$, such that if we denote the $t^p\bar{t}^q$-eigenspace of $\phi(t)$ by $V^{p,q}$, then $p+q=n$ for all non-zero $V^{p,q}$. Here $\mathbb{U}(\mathbb{R})$ is the circle $S^1 \subset \mathbb{C}$ considered over $\mathbb{R}$. We denote $\textbf{C}:=\phi_0(i)$. The adjoint action of the group $G_{\mathbb{R}}=Aut(V_{\mathbb{R}},Q)$ on $\phi$ defines the period domain of polarized Hodge structures of weight $n$ on $V$, denoted by

\begin{equation}
D=\{\phi:S^1 \to G_{\mathbb{R}} \ ; \ \phi=g^{-1}\phi_0 g , \ g \in G_{\mathbb{R}} \}.
\end{equation}

\noindent
Denote the centralizer of $\phi_0$ by $M=Z_{\phi_0}(G_{\mathbb{R}})$, then $D=G_{\mathbb{R}}/M$.

Alternatively a (${Q}$-) polarized $\mathbb{Q}$-Hodge structure on $V$ of weight $n$ is given by a decreasing filtration $F^{\bullet}: F^0=V \supset F^1 \supset ... \supset F^n=0$ on $V_{\mathbb{C}}$ such that the Hodge-Riemann bilinear relations hold

\begin{equation}
Q(F^p, F^{n-p+1})=0, \qquad Q(x,\text{\textbf{C}}.x) > 0 , \ (x \in V).
\end{equation}

\noindent
By setting $V^{p,q}=F^p \cap \overline{F^q}$ we obtain the decomposition $V=\bigoplus_{p+q=n} V^{p,q}$, where $\overline{V^{p,q}}=V^{q,p}$. We denote the set of all flags $F^{\bullet}$ satisfying the first condition in (3) by $\check{D}$. Then one has $\check{D}=G_{\mathbb{C}}/P$ where $P$ is a parabolic subgroup of $G$. 

The Lie algebra $\mathfrak{g}$ of the Lie group $G_{\mathbb{C}}$ is a $\mathbb{Q}$-linear subspace of $End(V)$, and inherits a Hodge structure of weight zero namely

\begin{equation}
\text{Ad}\phi:\mathbb{U}(\mathbb{R}) \to \text{Aut}(\mathfrak{g}_{\mathbb{R}},B)
\end{equation}

\noindent
which is polarized by the Cartan-Killing form $B: \mathfrak{g} \times \mathfrak{g} \to \mathbb{C}$. 

A variation of Hodge structure $\mathcal{V}$ on a quasi-projective variety $S$ can be given by its period map $\Pi:S \to \Gamma_{\mathbb{Z}} \backslash D$, where $\Gamma_{\mathbb{Z}}$ is a discrete subgroup of $G$. One has natural local systems $\mathcal{V}:=\Gamma \backslash (D \times {V}),  \ \mathcal{G}:=\Gamma \backslash (D \times \mathfrak{g})$ on $D$. The stalk of these local systems are the Hodge structures $V$ and $\mathfrak{g}=\mathfrak{gl}(V)$. The data of these local system can be encoded in the monodromy representation of $\rho:\Gamma_{\mathbb{Z}} \to G_{\mathbb{C}}$. We sometimes denote the  local system as $\mathcal{V}^{\rho}$. In fact equivalent representations of the monodromy group $\Gamma_{\mathbb{Z}}$ gives rise to isomorphic local system. Similar criteria holds for the associated vector bundles.

Associated to each nilpotent transformation $N \in \mathfrak{g}$ one defines a limit mixed Hodge structure. The local system $\mathcal{G} \to \Delta^*$ is then equipped with the monodromy $T=e^{\text{ad} N}$ and Hodge filtration defined with respect to the multi-valued basis of $\mathcal{G}$ by $e^{\log(t)\frac{N}{2\pi i}}F^{\bullet}$. They define a limit MHS $(\mathcal{G},F^{\bullet},W(N)_{\bullet})$. One has the hard Lefschetz isomorphism $N^k:Gr_{-k}^{W(N)}\mathcal{G} \stackrel{\cong}{\longrightarrow} Gr_{k}^{W(N)}\mathcal{G}$ and the polarizing form induces nondegenerate pairings 

\begin{equation}
B_k:PGr_k^{W(N)} \mathcal{G} \times PGr_{-k}^{W(N)} \mathcal{G} \to \mathbb{Q}, \qquad B_k(u,v)=B(u,N^kv).
\end{equation}

\begin{example} A basic example of this is the VHS $\mathcal{V}=\mathcal{V}^{1,0} \oplus \mathcal{V}^{0,1}$ of weight $1$, obtained from the middle cohomology of a fibration of curves of genus $g$. In this case 

\begin{equation}
D=\textbf{H}_g=\{Z \in M_{g \times g} : Z=^tZ, \ Im (Z) >0\}
\end{equation}

\noindent
is the Siegel generalized upper half space. The case $g=1$ corresponds to elliptic fibrations. In this case the spaces of differential forms are modular forms on the upper half plane $H$ and the polarization is given by the Peterson inner product; 

\begin{equation}
\langle f,g \rangle =\int_{\Gamma \backslash H}f(z)\overline{g(z)}y^{n-2}dxdy, \qquad (x=Re(z),\ y=Im(z)).
\end{equation}

\noindent
The inner product (7) is $SL_2(\mathbb{Z})$-invariant, cf. \cite{S}.

Let $\mathcal{A}_D^{p,q}(\mathfrak{g})$ be the vector space of smooth $C^{\infty}$-forms on $D$ with values in $\mathfrak{g}=\mathfrak{gl}(\mathcal{V})$. A vector valued $\mathcal{A}^{p,q}$-automorphic form $f \in  \mathcal{A}^{p,q}(D, \mathfrak{g})$ satisfies the automorphy relation 

\begin{equation}
Ad (\rho)f(z)=f(\gamma .z)\gamma'(z)^p \overline{\gamma'(z)^q}, \qquad  (\gamma \in \Gamma) 
\end{equation}

\noindent
where $\rho:\pi_1 \to Gl(V)$ is the monodromy representation.  The hyperbolic metric descended from $H$ defines the Hodge $*$-operator on $\mathcal{V}$ and $\mathcal{G}$ with the Hodge inner product  

\begin{equation}
\langle f,g \rangle =\int_H f(z) \wedge [*g(z)]dxdy=\int_{\Gamma \backslash H} tr(f(z) \wedge \overline{g(z)^{\text{tr}}})y^{p+q-2}dxdy.
\end{equation}

\noindent
In the case of the ordinary automorphic functions the right hand side is the Peterson inner product (2). The polarization form of a Hodge structure is unique up to scaling. Therefore, Peterson inner product must induce the Cartan-Killing form $B$ in the above construction. The primitive subspaces $P_l: \ker(N^{l+1}): Gr_{l}^W \mathcal{S} \to Gr_{-l-2}^W \mathcal{S}$ have pure Hodge structure of weight $k+l$ which are polarized by the forms $B_k$, \cite{TZ}.  
\end{example}

\vspace{0.3cm}

\section{cohomology of period domains}

\vspace{0.3cm}

Assume $G_{\mathbb{R}}$ is a reductive Lie group and $\mathfrak{g}=\text{Lie}(G_{\mathbb{C}})$ is its Lie algebra. Let $T$ be a maximal torus with $\mathfrak{t}=Lie(T)_{\mathbb{R}}$, and $\mathfrak{h}=\mathfrak{t}_{\mathbb{R}} \otimes \mathbb{C}$ a Cartan subalgebra. Fix a set of roots $\Phi=\Phi^+ \cup \Phi^-$ be a system of roots for $G_{\mathbb{C}}$ and $W$ its Weyl group. Write $\mathfrak{g}_{\mathbb{C}}=\mathfrak{t}_{\mathbb{C}} \oplus \mathfrak{n} \oplus \mathfrak{n}^-$ , where $\mathfrak{n}=\oplus_{\alpha \in \Phi^+} \mathfrak{g}^{\alpha}$. Let $\mathfrak{b}=\mathfrak{n} \oplus \mathfrak{h}$ be the associated Borel subalgebra with Lie group $B$. One matches these data with the structure of period domain $D=G_{\mathbb{R}}/M$ in the diagram

\begin{equation}
\begin{CD}
G_{\mathbb{R}}/T @>>> G_{\mathbb{C}}/B\\
@VVV      @VVV\\
D=G_{\mathbb{R}}/M @>>> G_{\mathbb{C}}/P=\check{D}.
\end{CD}
\end{equation}

\noindent
Set $\rho=\sum_{\alpha \in \Phi^+} \alpha \time 1/2$. Identify $T_s^{0,1}=\mathfrak{n}$. Then $\Omega_{D,s}^1=\mathfrak{n}^{\vee}$. Let $L_{\alpha}$ be the line bundle given by the root $\alpha \in \Phi^+$. Then, via the above identification we have 

\begin{equation}
A^{0,q}(D, L_{\alpha})=(C^{\infty}(G_{\mathbb{R}}) \otimes \Lambda^q \mathfrak{n}^{\vee} \otimes L_{\alpha})^T=:(C^{\infty}(G_{\mathbb{R}}) \otimes \Lambda^q \mathfrak{n}^*)_{-\alpha}.
\end{equation}

\noindent
Thus we have the following equality in the cohomological complexes

\begin{equation}
A^{0,q}((D, L_{\alpha}), \bar{\partial})=(C^{\infty}(G_{\mathbb{R}}) \otimes \Lambda^q \mathfrak{n}^{\vee}),\delta)_{-\alpha}
\end{equation}

\noindent
where $\bar{\partial}$ and $\delta$ are differentials. Therefore 

\begin{equation}
H^q(D, L_{\alpha})=H^q(\mathfrak{n},C^{\infty}(G_{\mathbb{R}}))_{-\alpha}.
\end{equation}

\begin{theorem} \cite{GS} If $\alpha + \rho$ is singular, then $H^k(D,L_{\alpha})=0$ for all $k$. If $\alpha + \rho$ is nonsingular, then let $w \in W$ be the element that carries $\alpha +\rho $ into the highest Weyl chamber. Let 

\begin{equation}
l = \sharp \{\alpha \in \Delta_+ \ |\ \omega(\alpha) \text{ is negative }\}.
\end{equation}

\noindent
Then $H^k(D,L_{\alpha})= 0$  when $k \ne l$. If $k=l$ then  $H^k(D,L_{\alpha})$ is an irreducible highest weight representation of $G$ of highest weight $w(\alpha+\rho)-\rho$. 
\end{theorem}

\section{Complex structure on period domains}

\vspace{0.3cm}

Let $G$ be a complex reductive Lie group defined over $\mathbb{R}$ and $\mathfrak{g}:=Lie(G)=\mathfrak{h} \oplus \bigoplus_{\alpha \in R} \mathfrak{g}_{\alpha}$ its root decomposition, and $T$ a maximal torus in $G_{\mathbb{C}}$. Let $M$ be a closed subgroup of $G$ with $\mathfrak{m}=Lie(M_{\mathbb{C}})=\mathfrak{h} \oplus \bigoplus_{\beta \in \Delta} \mathfrak{g}_{\beta}$. The tangent space of $G/M$ at the point $eM$ can be identified with $\mathfrak{g}/\mathfrak{m}$. An $M$-invariant almost complex structure on $G/M$ is given by a linear map $J:\dfrac{\mathfrak{g}}{\mathfrak{m}}(\mathbb{R}) \to  \dfrac{\mathfrak{g}}{\mathfrak{m}}(\mathbb{R})$ such that $J^2=-id$. Assume $\tau$ is the isotropy representation on $M$. Then  

\begin{align}
J(\tau(m).X)=\tau(m).J(X), \qquad m \in \mathfrak{m} , \ X \in \mathfrak{g}. 
\end{align}

\noindent
The restriction of the isotropy representation $\tau$ to $T$ is the adjoint representation. It follows that $J$ leaves each $\mathfrak{g}_{\beta}$ invariant, and reduces to multiplication by $\pm i$ on $\mathfrak{g}_{\beta}, \ (\beta \in R \setminus \Delta)$. Set

\begin{equation}
\textbf{C}:=\{\alpha \in R \setminus \Delta ; \  J|_{\mathfrak{g_{\alpha}}}=i.\  id\ \}.
\end{equation}

\noindent
A choice of $G$-invariant almost complex structure on $G/M$ gives rise to determine $\textbf{C}$ such that if $\alpha \in R \setminus \Delta$, then only one of $\alpha $ or $-\alpha$ is in $\textbf{C}$. It is possible to choose a system of positive roots $R_+$ such that $C \subset R_+$. In this case $R_+$ is called the system of positive roots adopted to invariant almost complex structure. This criteria generalizes to arbitrary homogeneous vector bundles on $G/M$ defined by a representation $\rho: M \to Gl(V)$ on a vector space $V$. In this case we have natural maps $GL(V) \to G \times GL(V)/H \to G/M$, where $H$ is the subgroup of elements of the form $(x, \rho(x)), \ x \in M$.

\begin{proposition}\cite{R}
Let $\rho:M \to Gl(V)$ be an $n$-dimensional complex representation and $E:=G \times_M V$ the associated vector bundle on $G/M$. Then there exists a 1-1 correspondence between $G$-invariant almost complex structures on $E$ and the set

\begin{equation}
Hom_M^J(\ \bigoplus_{\alpha \in C} \mathfrak{g}_{\alpha},\mathfrak{g}'):=\{f :\oplus \mathfrak{g}_{\alpha} \to \mathfrak{g} |\ f \circ J=-J \circ f\} 
\end{equation}

\noindent
where $\mathfrak{g}'=\mathfrak{gl}(V)$, with a choice of $\textbf{C}_{\mathfrak{gl}_n}$ given as (13). Moreover such almost complex structure, will be always a complex (holomorphic) structure. 
\end{proposition}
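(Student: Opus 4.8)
The plan is to reduce everything to the single fibre over the base point by invariance, and then to read off the almost complex structure as a block matrix whose off-diagonal entry is exactly a Wang--Nomizu connection datum. First I would use $G$-invariance: a $G$-invariant almost complex structure $J$ on the total space of $E=G\times_V H$ is completely determined by its value at a point $o$ in the fibre over the base coset $eV$, and this value must be an $\mathrm{Ad}(V)$-equivariant endomorphism of the tangent space $T_oE$. That tangent space splits $V$-equivariantly as $T_oE=\mathfrak{m}_{\mathbb{R}}\oplus H$, where $\mathfrak{m}_{\mathbb{R}}=\mathfrak{g}_{\mathbb{R}}/\mathfrak{v}_{\mathbb{R}}\cong\bigoplus_{\alpha\in R-\Delta}\mathfrak{g}_{\alpha}$ is the horizontal (base) part and $H$ is the vertical (fibre) part carrying the complex structure coming from $\rho$. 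Writing $J$ in block form relative to this splitting is the first concrete step.

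Next I would pin down the diagonal blocks. The restriction of $J$ to the base factor $\mathfrak{m}_{\mathbb{R}}$ is forced exactly as in the discussion preceding the statement: since $T\subset V$ acts by the adjoint representation, $J$ preserves each root space $\mathfrak{g}_{\alpha}$ and acts there by $\pm i$, so it is completely encoded by the set $C$ of (18) with $C\subset R_+$. The restriction of $J$ to $H$ must be a $V$-equivariant complex structure on the $\rho$-module $H$, i.e. multiplication by $i$. The only remaining freedom lies in the off-diagonal blocks relating horizontal and vertical directions; the relation $J^2=-\mathrm{id}$ together with $V$-equivariance then forces this datum to anticommute with the two fixed complex structures and to be $V$-equivariant. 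By Wang's description of invariant connections on a homogeneous bundle this is precisely a $V$-equivariant map into $\mathfrak{gl}(H)=\mathfrak{gl}_n(\mathbb{C})$, and the $(1,0)/(0,1)$ decomposition of $\mathfrak{m}_{\mathbb{C}}$ determined by $C$ restricts the essential information to the holomorphic directions $\bigoplus_{\alpha\in C}\mathfrak{g}_{\alpha}$, since an anticommuting equivariant map is determined by its restriction there. This identifies the datum with an element of $Hom_V^J(\bigoplus_{\alpha\in C}\mathfrak{g}_{\alpha},\mathfrak{g}')$, and reconstructing $J$ from $f$ furnishes the inverse map, giving the claimed bijection.

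Finally I would establish that the resulting almost complex structure is integrable, hence genuinely holomorphic. Here I would invoke the Newlander--Nirenberg theorem and reduce integrability to closedness of the $(0,1)$-distribution under the Lie bracket. On the base this closedness is the statement that $\mathfrak{q}=\mathfrak{v}_{\mathbb{C}}\oplus\bigoplus_{\alpha\in C}\mathfrak{g}_{\alpha}$ is a Lie subalgebra of $\mathfrak{g}_{\mathbb{C}}$, which holds because $C$ is a system of roots adopted to the invariant almost complex structure with $C\subset R_+$; the relations $[\mathfrak{g}_{\alpha},\mathfrak{g}_{\alpha'}]\subset\mathfrak{g}_{\alpha+\alpha'}$ then keep the distribution closed. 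The vertical directions close trivially since $H$ carries a constant linear complex structure, and the mixed brackets stay in the $(0,1)$-space precisely because of the $J$-compatibility (anticommuting) condition imposed on $f$.

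I expect this last, integrability step to be the main obstacle: one must check carefully that the horizontal--vertical mixing produced by the connection datum $f$ does not create a $(0,2)$-component of the curvature, i.e. that the bracket of two $(0,1)$-vectors still lands in the $(0,1)$-space once the off-diagonal block is switched on. This is exactly where the defining condition on $f$ must be used, and verifying it amounts to a bracket computation combining the root-space relations on $\mathfrak{g}_{\mathbb{C}}$ with the infinitesimal action $d\rho$ of $\mathfrak{v}$ on $H$.
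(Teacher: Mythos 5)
Your reduction is in the same spirit as the paper's argument, but the bookkeeping is genuinely different, and the difference matters for the last claim. The paper never touches the tangent space $T_oE\cong\mathfrak{m}_{\mathbb{R}}\oplus H$ of the total space: it diagram-chases in the split exact sequence (19) of $V$-modules, $0\to\mathfrak{g}'\to\mathfrak{p}=(\mathfrak{g}\times\mathfrak{g}')/\mathfrak{h}_{\mathbb{R}}\to(\mathfrak{g}/\mathfrak{v})_{\mathbb{R}}\to 0$, i.e.\ in the Atiyah-type sequence of the bundle, whose kernel is $\mathfrak{g}'=\mathfrak{gl}(H)$ rather than the fibre $H$; invariant structures appear there as equivariant lifts/splittings, and the holomorphic ones are then \emph{defined} by the algebraic condition that $\mathrm{ad}\,\rho:\mathfrak{v}\to\mathfrak{gl}(H)$ extends to $\mathfrak{v}\oplus\bigoplus_{\alpha\in C}\mathfrak{g}_{-\alpha}\to\mathfrak{gl}(H)$. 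Your block-matrix analysis (diagonal blocks forced by $C$ and by $\rho$, off-diagonal block an equivariant $J$-anticommuting map, which upon passing to conjugates is exactly a datum on $\bigoplus_{\alpha\in C}\mathfrak{g}_{-\alpha}$) recovers the same Hom space, so for the bijection itself the two routes are essentially equivalent, yours being the more explicit.

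Two genuine gaps remain. First, $G$ does not act transitively on the total space of $E$: the orbit of $o=[e,0]$ is only the zero section, so a $G$-invariant $J$ is \emph{not} determined by its value at one point; it is determined by its $V$-equivariant restriction to the whole fibre $H$, which is an infinite-dimensional amount of data unless you assume from the outset that $J$ is compatible with the linear (vector bundle) structure and that the fibrewise structure is the given multiplication by $i$. That hypothesis is implicit in the statement and built into the paper's sequence (19), but your argument actually uses it and must state it. Second, and more seriously, the integrability step is left open: you correctly isolate the mixed horizontal--vertical bracket as the crux, but you do not verify that it closes, and this is precisely the point the paper's formulation absorbs structurally --- integrability amounts to the off-diagonal datum, together with $\mathrm{ad}\,\rho$, defining a Lie algebra homomorphism on $\mathfrak{v}\oplus\bigoplus_{\alpha\in C}\mathfrak{g}_{-\alpha}$, which is an additional bracket condition, not something that follows for free from equivariance plus anticommutativity. (Your base-level check is fine: when $C\subset R_+$ one has $C=R_+\setminus\Delta$, so $[\mathfrak{g}_{-\alpha},\mathfrak{g}_{-\beta}]$ stays in $\mathfrak{v}_{\mathbb{C}}\oplus\bigoplus_{\gamma\in C}\mathfrak{g}_{-\gamma}$; it is only in the fibre direction that your proposal stops short of a proof.)
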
 

The proof uses the split exact sequence of $M$-modules

\begin{equation}
0 \to \mathfrak{g}' \to \mathfrak{p}=\mathfrak{p}=(\mathfrak{g} \times \mathfrak{g}')/\mathfrak{h}_{\mathbb{R}} \to (\mathfrak{g}/\mathfrak{v})_{\mathbb{R}} \to 0.
\end{equation}

\noindent
Then $G$-invariant holomorphic structures correspond to all possible extensions of $ad \rho:\mathfrak{m} \to \mathfrak{gl}(V)$ to $ad \rho:\mathfrak{m} \bigoplus_{\alpha \in \textbf{C}} \mathfrak{g}_{-\alpha} \to \mathfrak{gl}(H)$, see \cite{R} for the proof. 

\vspace{0.3cm}

\section{Geometry of Hodge bundles}

\vspace{0.3cm}

Assume $X$ is a compact connected Riemann surface of genus $g$ and $\emptyset \ne S \subset X$ a divisor on $X$. A parabolic bundle $F$ with parabolic divisor $S$ consists of the data of a filtration (quasi-parabolic condition) 

\begin{equation}
F_s=F_{s,1} \supset ... \supset F_{s,l_s} \supset 0, \qquad (\forall s \in S)
\end{equation}

\noindent
and rational numbers $0 \leq \alpha(s) < ... < \alpha_{l_s} < 1$ called parabolic weights. Fix a positive integer $r$. The parabolic degree of $F$ is defined by

\begin{equation}
\text{Par-deg}(F)=\deg(F)+\sum_{s \in S}\sum_i \alpha_{i(s)}(s)\dim(F_{s,i}/F_{s,i+1}).
\end{equation}

\noindent
A Hermitian structure on a parabolic bundle $F$ is a Hermitian metric $\Vert.\Vert$ on $F_{|_{X \diagdown S}}$ with the extra condition that; around each $s \in S$ for any section $\sigma $ of $F$, that is non-zero at $F_{s,i}$, one has $\Vert \sigma(z) \Vert=\phi(z)|z|^{\alpha_i(s)}$, where $\phi$ is positive real valued. If $F$ is (Hermitian) parabolic, then every sub-bundle of $F$ is (Hermitian) parabolic in a natural way. We call a parabolic bundle semi-stable if the factor 
$(\text{par-deg}/\text{rank})$ is non-increasing when passing to sub-bundles. In case this number is strictly decreasing we call the bundle stable. 

Lets consider the local systems of Hodge structures $\mathcal{V}^{\rho}$ and $\mathcal{G}=\mathfrak{gl}(\mathcal{V}^{\rho})$ in the example (0.1). As explained before in this case the associated vector bundle (which we denote by the same notation) are Hermitian symmetric spaces where the Hermitian form is given by the Peterson inner product. The representation $\rho$ is unitary in this case. The period domain can be identified with the upper half plane. In this case the parabolic degree can be computed to be $0$, [cf. \cite{TZ}]. 

\begin{theorem} (Mehta-Seshadri) \cite{TZ} A parabolic bundle $F$ of rank $k$ and parabolic degree $0$ is stable if and only if it is isomorphic to a bundle $F^{\rho}$, where $\rho:\Gamma_{\mathbb{Z}} \to U(k)$ is an irreducible unitary representation of the monodromy group $\Gamma$ which is admissible with respect to the weights and multiplicities.  
\end{theorem}

The representation $\rho$ in Theorem is called admissible with respect to a given set of weights and multiplicities $(\alpha_l^i,k_l^i)$ at $S$ if for each $i$, there exists a generator $s_i$ for the local monodromy at $P_i \in S$, and $U_i \in U(k)$ and $D_i=\exp(2 \pi i .diag[\alpha_1^i,...,\alpha_{r_i}^i])$ such that $\rho(S_i)=U_iD_iU_i^{-1}$, ( $\alpha_l^i=\alpha_l(P_i)$ is repeated $k_l^i=k_l(P_i)$ times ). 

\begin{corollary}
The vector bundle of the local systems $\ \mathcal{V}^{\rho},\  \mathcal{G}=\mathfrak{gl}(\mathcal{V}^{\rho})\ $ in the example (0.1) are stable parabolic vector bundles over $\Gamma \backslash H$.
\end{corollary}

ACKNOWLEDGEMENT: I thank the Abdus Salam School of Mathematical Sciences , GCU, for its research facilities and financial support.


\begin{thebibliography}{99}

\bibitem{GGK} M. Green, P. Griffiths, M. Kerr, Mumford-Tate domains,  Bollettino dell' UMI (9) III (2010), 281-307.

\bibitem{G} P. Griffiths, Hodge theory and representation theory, Ten lectures given at TCU, June 18-22, 2012

\bibitem{GS} P. Griffiths, W. Schmid , Locally homogeneous complex manifolds, 
 
\bibitem{R} S. Ramanan, Homogeneous vector bundles on homogeneous space, Topology Vol 5. , pp. 159-177 , 1966

\bibitem{S} J. P. Serre, A course in Arithmetic, Graduate text in mathematics (7), New York, Springer-Verlag 1973

\bibitem{Sch} W. Schmid, Variation of hodge structure: The singularities of the period mapping, Inventiones mathematicae, Volume 22, Issue 3–4, pp 211–319,  1973
 
\bibitem{TZ} L. A. Takhtajan, P. G. Zograf, The first chern form on moduli of parabolic bundles, Volume 341, Issue 1, pp 113–135, 2008

\end{thebibliography}
\end{document}